%
%
%
%
%
\RequirePackage{fix-cm}
\documentclass[smallextended]{svjour3}       
\smartqed  
\usepackage{graphicx}
%
%

\usepackage{amsbsy}
\usepackage{amsmath}
\usepackage{amsfonts}
\usepackage{mathrsfs}
\usepackage{mathtools}
\usepackage{bm}
\usepackage{amssymb}
\usepackage{bbm}
\usepackage{enumerate}

\usepackage{subfig}
%



\newcount\colveccount
\newcommand*\colvec[1]{
	\global\colveccount#1
	\begin{pmatrix}
		\colvecnext
	}
	\def\colvecnext#1{
		#1
		\global\advance\colveccount-1
		\ifnum\colveccount>0
		\\
		\expandafter\colvecnext
		\else
	\end{pmatrix}
	\fi
}

\newcommand{\ndN}{\mathbb{N}}
\newcommand{\ndZ}{\mathbb{Z}}

\newcommand{\ndR}{\mathbb{R}}

\renewcommand{\Pr}[1]{\mathbb{P}(#1)}


\newcommand{\Ex}[1]{\mathbb{E}[#1]}

\newcommand{\Exb}[1]{\mathbb{E}\left[#1\right]}




\newcommand{\convdis}{\,{\buildrel d \over \longrightarrow}\,}
\newcommand{\convd}{\,{\buildrel d \over \longrightarrow}\,}

\newcommand{\convp}{\,{\buildrel p \over \longrightarrow}\,}


\newcommand{\eqdist}{\,{\buildrel d \over =}\,}







\newcommand{\cM}{\mathcal{M}}



\newcommand{\mM}{\mathsf{M}}


\newcommand{\mfL}{\mathfrak{L}}

\newcommand{\mfX}{\mathfrak{X}}

\newcommand{\ve}{\mathrm{v}}

%
%

\begin{document}

\title{Quenched local convergence of Boltzmann planar maps}


\author{Benedikt Stufler}


\institute{Benedikt Stufler \at
              Vienna University of Technology \\
              Institute of
Discrete Mathematics and Geometry \\
E-mail: \includegraphics{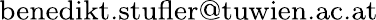}
}


\maketitle

\begin{abstract}
Stephenson~(2018) established annealed local convergence of Boltzmann planar maps conditioned to be large. The present work uses results on rerooted multi-type branching trees to prove a quenched version of this limit.
\keywords{Boltzmann planar maps \and local convergence}
\end{abstract}

\section{Introduction}

A planar map $M$ is a connected planar
graph,  possibly  with  loops  and  multiple  edges,  together  with  an  embedding  into  the plane. Usually one edge is directed and distinguished as the root edge.  Various analytic, combinatorial, and probabilistic techniques for studying models of random planar maps have been developed, see~\cite{MR1871555,zbMATH06549737}. The bijection by \cite{MR2097335} encodes planar maps as mobiles, which are vertex-labelled $4$-type planar trees. This allows for a generating procedure for certain models of random planar maps using $4$-type Galton--Watson trees, see \cite{MR2509622}. For bipartite Boltzmann planar maps, a bijection constructed by \cite{MR3342658} simplifies the generating procedure to use only monotype Galton--Watson trees. However, it is an open problem whether a full reduction to mono-type trees is possible in the non-bipartite case, hence the need to study multi-type Galton--Watson trees for this purpose persists.\footnote{The author thanks Sigurdur {\"O}rn Stef{\'a}nsson for related comments.} 

Recent work by Stephenson \cite{MR3769811} establishes local convergence of conditioned regular critical multi-type Galton--Watson trees, and applies this convergence to a conditioned Boltzmann planar map $\mM_n$. The main application is a limit theorem that shows how an Infinite Boltzmann Planar Map $\hat{\mM}$ describes the asymptotic behaviour  of the vicinity of the root-edge of $\mM_n$ as $n \to \infty$. This generalizes local convergence results for bipartite maps by~\cite{MR3183575} and special cases like triangulations and quadrangulations by~\cite{MR2013797,2005math.....12304K}.

The present work establishes a corresponding \emph{quenched} version of the limit theorem. Roughly speaking, the difference is that instead of studying the probability for the vicinity of the root-edge of $\mM_n$ to have a certain shape, we establish laws of large numbers for the  number of corners, faces, and vertices whose vicinity has this shape. See Theorem~\ref{te:regcritquench}. Our main tools are quenched limits of rerooted multitype trees established recently in~\cite{multrerooted}.\footnote{The results of the present work were initially part of~\cite{multrerooted}. The paper was split during the review process following a referee's recommendation.}

As an application,  we deduce quenched local convergence of the random planar map $\mM_n^t$ with  $n$ edges and a positive weight $t>0$ at vertices.  That is, $\mM_n^t$ assumes a map $M$ with $n$ edges with probability proportional to $t^{\ve(M)}$, with $\ve(M)$ denoting the number of vertices of $M$. See Theorem~\ref{te:quenchmaptt}. 
 The vertex weighted random planar map $\mM_n^t$ is related to the study of uniform random planar graphs, see~\cite{MR3068033,MR2735332}. We  apply the quenched local convergence of $\mM_n^t$  in the subsequent paper~\cite{planar} to deduce local convergence of the uniform random planar graph.

\section*{Notation}
We let  $\ndN_0 = \{0,1,2,\ldots\}$ denote the collection of non-negative integers, and $\ndN$ the collection of positive integers. The law of a random variable $X: \Omega \to S$ with values in some measurable space $S$ is denoted by $\mfL(X)$.  If $Y: \Omega \to S'$ is a random variable with values in some measurable space $S'$, we let $\mfL(X \mid Y)$ denote the conditional law of $X$ given $Y$.  All unspecified limits are taken as $n \to \infty$.  Convergence in probability and distribution are denoted by $\convp$ and $\convd$. 
We say an event holds with high probability if its probability tends to $1$ as $n$ becomes large. 
For any sequence $(a_n)_{n \ge 1}$ of positive real numbers we let $O_p(a_n)$ denote a random variable $Z_n$ such that $(Z_n / a_n)_{n \ge 1}$ is stochastically bounded. 

\section*{Index of terminology}

\noindent The following list summarizes frequently used terminology.

\begin{tabular}{p{0.1\linewidth} p{0.75\linewidth}} 
	$\bm{\xi}$ & An unordered $D$-offspring distribution  $\bm{\xi} = (\bm{\xi}_i)_{i \in \mathfrak{G}}$, page~\pageref{de:offspring}.\\[2pt]
	$\#_i(\cdot)$ & Number of vertices of type $i \in \mathfrak{G}$, page~\pageref{de:numi}.\\[2pt]
	$|\cdot|_{\bm{\gamma}}$ & Sum of vertices weighted depending on their type, page~\pageref{eq:weighted}\\[2pt]	
	$\bm{T}(\eta)$ & A $\bm{\xi}$-Galton--Watson tree with (possilby random) root type $\eta$, page~\pageref{de:bmt}.\\[2pt]
	$\bm{T}^\kappa$ & Like $\bm{T}(\kappa)$, but non-root vertices of type $\kappa$ receive no offspring, page~\pageref{de:ta}.
\end{tabular}

\begin{tabular}{p{0.1\linewidth} p{0.75\linewidth}} 
	$\hat{\bm{T}}^\kappa$ & A random tree with a marked leaf of type~$\kappa$. Distributed like  $\bm{T}^\kappa$ biased by the number of vertices with type~$\kappa$, page~\pageref{eq:sbstep}.\\[2pt]
	$\hat{\bm{T}}(\kappa)$ & A random infinite tree with a marked vertex of type~$\kappa$ and a spine that grows backwards, page~\pageref{de:htk}.\\[2pt]
	$\hat{\bm{T}}^{\kappa,\iota}$ & A random tree with root type~$\kappa$ and a marked vertex of type $\iota$. Obtained by biasing $\bm{T}^{\kappa}$ by the number of vertices of type~$\iota$, page~\pageref{de:tkg}.\\[2pt]
	$\hat{\bm{T}}(\kappa, \iota)$ & A random infinite tree with a marked vertex of type~$\iota$ and a spine that grows backwards, page~\pageref{de:tkg2}.
\end{tabular}        


\vspace{1 \baselineskip}

\section{Preliminaries}

\subsection{Local topologies for planar maps}

The local topology describes how similar two planar maps are in the vicinity of specified root vertices or root edges.  We briefly recall relevant notions and refer the reader to the elegant presentations by \cite{curienlecture2} for details.

Let $\mathfrak{M}^{\mathrm{e}}$ denote the collection of  finite planar maps with an oriented root edge.  The origin of this root edge is called the root vertex. The face to the left of the oriented root-edge is called the root face. 
Likewise, we let  $\mathfrak{M}^{\mathrm{v}}$ denote the collection of finite planar maps that only have a specified root vertex instead of an oriented root edge. We also let $\mathfrak{M}^{\mathrm{f}}$ denote the collection of  finite planar maps that only carry a marked root face instead. 
In the following, $\mathfrak{M}$ refers to $\mathfrak{M}^{\mathrm{e}}$, $\mathfrak{M}^{\mathrm{v}}$, or $\mathfrak{M}^{\mathrm{f}}$, as all related concepts are analogous for these three cases. Note that $\mathfrak{M}$ is countably infinite.

For any integer $k \ge 0$ we may consider the subset $\mathfrak{M}_k \subset \mathfrak{M}$ of planar maps where each vertex has ``distance'' at most $k$ from the root. Here ``distance from the root'' refers to the graph distance from the root vertex in case of vertex rooted maps, or the graph distances to the ends of the root-edge for edge rooted maps. For face rooted maps, we define the  distance as the length of some shortest path from the vertex to the boundary of the marked face. We equip $\mathfrak{M}_k$ with the discrete topology.

The projection \[
U_k: \mathfrak{M} \to \mathfrak{M}_k\] maps a planar map $M$ to the $k$-neighbourhood  $U_k(M)$  of its root. Depending on whether $\mathfrak{M}$ refers to $\mathfrak{M}^{\mathrm{e}}$, $\mathfrak{M}^{\mathrm{v}}$, or $\mathfrak{M}^{\mathrm{f}}$ we view  $U_k(M)$ as equipped with an oriented root edge, a root vertex, or a root face.

The local topology on the collection $\mathfrak{M}$ is the coarsest topology that makes these projections continuous. This projective limit topology is metrizable by
\[
d_{\mathrm{loc}}(M, M') = \frac{1}{1 + \sup\{k \ge 0 \mid U_k(M) = U_k(M')\}}, \qquad M, M' \in \mathfrak{M}.
\]
The space $(\mathfrak{M}, d_{\mathrm{loc}})$ is not complete. One way to complete it, is to form the space $\overline{\mathfrak{M}}$ of coherent sequences
\[
\overline{\mathfrak{M}} = \{ (M_1, M_2, \ldots) \mid U_i(M_{i+1}) = M_i \text{ for all $i \ge 1$} \} \subset \mathfrak{M}^{\ndN}.
\]
We may interpret $\mathfrak{M}$ as a subset of $\overline{\mathfrak{M}}$, and extend $d_{\mathrm{loc}}$ and $U_k(\cdot)$ in a canonical way. This makes $\overline{\mathfrak{M}}$ a Polish space, see~\cite[Prop. 1]{curienlecture2} for details.

Let $(\mM_n)_{n \ge 1}$ be a sequence of random finite planar maps, and let $u_n$ be either a uniformly selected vertex, oriented edge, or face. This makes the pair $(\mM_n, u_n)$ a random element of~$\overline{\mathfrak{M}}$. Here we forget about any possibly present root of $\mM_n$, and only consider $u_n$ as the new root. Distributional convergence of $(\mM_n, u_n)_{n \ge 1}$ is equivalent to distributional convergence of each neighbourhood $U_k(\mM_n, u_n)$, $k \ge 0$, as $n$ tends to infinity. If $\hat{\mM}$ is a random element of~$\overline{\mathfrak{M}}$, then
\begin{align}
	(\mM_n, u_n) \convdis \hat{\mM}
\end{align}
is equivalent to
\begin{align}
	\Pr{U_k(\mM_n, u_n) = M} \to \Pr{U_k(\hat{\mM})=M}
\end{align}
for each fixed integer $k \ge 0$ and each finite planar map $M \in \mathfrak{M}$ as $n \to \infty$. Using language from statistical physics, this form of convergence is also called \emph{annealed} convergence.

The collection $\mathbb{M}_1(\overline{\mathfrak{M}})$  of probability measures on the Borel sigma algebra of $\overline{\mathfrak{M}}$ is a Polish space with respect to the weak convergence topology.   The conditional distribution $\mfL( (\mM_n, u_n) \mid \mM_n)$ is a random element of $\mathbb{M}_1(\overline{\mathfrak{M}})$. We say $(\mM_n, u_n)$ converges in the \emph{quenched} sense, if the random probability measure $\mfL( (\mM_n, u_n) \mid \mM_n)$ converges in distribution to a random element of $\mathbb{M}_1(\overline{\mathfrak{M}})$. For the special case where the limit is almost surely constant and given by the law $\mfL(\hat{\mM})$ of some random element $\hat{\mM}$ of $\overline{\mathfrak{M}}$, we say $(\mM_n, u_n)$ converges in the quenched sense towards~$\hat{\mM}$.

\subsection{Limits of rerooted multi-type trees}

Given an integer $D \ge 1$, a \emph{$D$-type plane tree} $T$ is a plane tree where we assign to each vertex a \emph{type} from $\{1, \ldots, D\}$.  In particular, $T$ has a root vertex, and for each vertex we have a linear order on its collection of children.  For $1 \le j \le D$ we let \label{de:numi} $\#_j T$ denote the total number of vertices with type $j$. Furthermore, for any vector $\bm{\gamma} = (\gamma_1, \ldots, \gamma_D) \in \ndR^D$ we set
\begin{align}
	\label{eq:weighted}
	|T|_{\bm{\gamma}} = \sum_{i=1}^D \gamma_i \#_i T.
\end{align}

If we distinguish a vertex $v$ of $T$, we form a \emph{marked} tree $(T,v)$. The subtree consisting of $v$ and all its descendants is called the \emph{fringe subtree} of $T$ at $v$. For any integer $k \ge 0$ we may form the \emph{extended fringe subtree} $f^{[k]}(T,v)$ consisting of the fringe subtree of $T$ at the $k$th ancestor of $v$, marked at the vertex corresponding to $v$. Of course, this only makes sense if $v$ has height at least $k$ in~$T$. Otherwise, we set  $f^{[k]}(T,v)$ to some placeholder value. 

The path from $v$ to the root of $T$ is called the \emph{spine} of the marked tree $(T,v)$. We may also consider marked trees where this spine has a countably infinite length, such that $v$ has a countably infinite number of ancestors. We let $\mfX$ denote the collection of all finite marked $D$-type trees and all marked $D$-type trees with an infinite spine such that all extended fringe subtrees are finite. 

The collection $\mfX$ may be endowed with a metric $d_{\mfX}$ such that for all $T^\bullet_1, T_2^\bullet \in \mfX$
\begin{align}
	d_{\mfX}(T^\bullet_1, T_2^\bullet) = 
	\begin{cases}
		2,  &f(T_1^\bullet) \neq f(T_2^\bullet) \\
		2^{- \sup\{k \ge 0 \mid f^{[k]}(T_1^\bullet) = f^{[k]}(T_2^\bullet)\}}, &f(T_1^\bullet) = f(T_2^\bullet). 
	\end{cases}
\end{align}
This makes $(\mfX, d_{\mfX})$ a Polish space, see \cite[Prop. 1]{multrerooted}.

Let $(\bm{T}_n)_{n \ge 1}$ be a sequence of random finite $D$-type trees. Let \[\mathfrak{G}_0 \subset \{1, \ldots, D\}\] denote a non-empty subset, such that the probability for $\bm{T}_n$ to have vertices with type in $\mathfrak{G}_0$ tends to $1$ as $n$ becomes large. Let $v_n$  be uniformly selected among all vertices of $\bm{T}_n$ with type in $\mathfrak{G}_0$. Then $(\bm{T}_n, v_n)$ is a random element of $\mfX$. We say $(\bm{T}_n, v_n)$ convergences in the \emph{annealed} sense towards a random element $\bm{T}^\bullet$ of $\mfX$, if
\[
(\bm{T}_n, v_n) \convdis \bm{T}^\bullet
\]
in the usual sense of distributional convergence of random elements of the Polish space~$\mathfrak{X}$. The  conditional distribution $\mfL( (\bm{T}_n, v_n) \mid \bm{T}_n)$ is a random element of the collection $\mathbb{M}_1(\mathfrak{X})$ of Borel probability measures on $\mathfrak{X}$. That is we take the tree $\bm{T}_n$ (this is where the randomness comes from) and consider the uniform distribution on all marked versions of $\bm{T}_n$ where the marked vertex has type in $\mathfrak{G}_0$.  We say $\bm{T}^\bullet$ is the \emph{quenched} limit of $(\bm{T}_n, v_n)$,~if
\[
\mfL( (\bm{T}_n, v_n) \mid \bm{T}_n) \convdis  \mfL(\bm{T}^\bullet)
\]
in the  sense of distributional convergence of random elements of the Polish space~$\mathbb{M}_1(\mathfrak{X})$.

\subsection{Galton--Watson trees}

Let $D \ge 1$ be an integer.
A $D$-type Galton--Watson tree is a random locally finite $D$-type plane tree defined as follows. Let \label{de:offspring} $\bm{\xi} = (\bm{\xi}_i)_{1 \le i \le D}$ be a family of random elements $ \bm{\xi}_i \in \ndN_0^D$. For any integer $1 \le \kappa \le D$ the $\bm{\xi}$-Galton--Watson  $\bm{T}(\kappa)$ starts with a single root vertex with type $\kappa$. For all $1 \le i \le D$ any vertex of type $i$ receives offspring vertices according to an independent copy of $\bm{\xi}_i$, with the $j$th coordinate (for $1 \le j \le D$) corresponding to the number of children with type~$j$. For our purposes, we will always assume that the collection of all children is ordered uniformly at random. If $\eta$ is a random element of $\{1, \ldots, D\}$, independent from all previously considered random variables, we let $\bm{T}(\eta)$ denote the mixture of $(\bm{T}(\kappa))_{1 \le \kappa \le D}$ that assumes $\bm{T}(\kappa)$ with probability $\Pr{\eta = \kappa}$ for each $ 1\le \kappa \le D$. That is, here the type of the root vertex is random and distributed like  $\eta$.\label{de:bmt}

We define $\bm{T}^\kappa$\label{de:ta} similar to $\bm{T}(\kappa)$, only that non-root vertices with type $\kappa$ receive no offspring. Let us assume that 
\begin{align}
	\bm{T}^\kappa \text{ is a.s. finite} \qquad \text{and} \qquad  \Exb{\#_\kappa \bm{T}^\kappa} = 2.
\end{align}
This allows us to define the $\kappa$-biased version $\hat{\bm{T}}^\kappa$ with distribution
\begin{align}
	\label{eq:sbstep}
	\Pr{\hat{\bm{T}}^\kappa = (T^\kappa, u)} = \Pr{\bm{T}^\kappa = T^\kappa}
\end{align}
for any pair $(T^\kappa, u)$ of a finite $\mathfrak{G}$-type tree $T^\kappa$ (with the root having type $\kappa$ and all non-root vertices of type $\kappa$ having no offspring) and a non-root leaf $u$ of $T^\kappa$ with type $\kappa$.

We construct a random tree $\hat{\bm{T}}(\kappa)$ that has an infinite ``backwards'' growing spine $u_0, u_1, \ldots$  of type $\kappa$ vertices, such that $u_{\ell + 1}$ is an ancestor (not necessarily parent) of $u_{\ell}$ for all $\ell \ge 0$. The construction is as follows. We start with the vertex $u_0$ that becomes the root of an independent copy of $\bm{T}(\kappa)$. The vertex $u_1$ becomes the root of an independent copy of $\hat{\bm{T}}^\kappa$, which has a marked leaf. All non-marked leaves of type $\kappa$ become roots of independent copies of $\bm{T}(\kappa)$, and we identify the marked leaf with $u_0$ (``glueing'' the two vertices together). We proceed in this way with an ancestor $u_2$ of $u_1$ and so on, yielding an infinite backwards growing spine $u_0, u_1, \ldots$ of type $\kappa$ vertices.

The tree  $\hat{\bm{T}}(\kappa)$ \label{de:htk} constitutes the multi-type analogue of Aldous' invariant sin-tree constructed in~\cite{MR1102319} for critical monotype Galton--Watson trees. The abbreviation \emph{sin} stands for \emph{single infinite path}.

Suppose that $\iota  \in \{1, \ldots, D\}$ is a type. If the number of non-root type $\iota$-vertices in $\bm{T}^\kappa$ has a finite non-zero expectation $E$, we may form the $\iota$-biased version $\bm{T}^{\kappa, \iota}$ of $\bm{T}^\kappa$ with distribution
\begin{align}
	\label{de:tkg}
	\Pr{\hat{\bm{T}}^{\kappa, \iota} = (T^\kappa, u)} = \Pr{\bm{T}^\kappa = T^\kappa} / E
\end{align}
for any pair $(T^\kappa, u)$ of a finite $\mathfrak{G}$-type tree $T^\kappa$ (with the root having type $\kappa$ and all non-root vertices of type $\kappa$ having no offspring) and a non-root leaf $u$ of $T^\kappa$ with type $\iota$. This allows us to construct the tree $\hat{\bm{T}}(\kappa, \iota)$\label{de:tkg2} analogous to $\hat{\bm{T}}(\kappa)$ with the only difference being that in the construction we start with a type $\iota$ vertex $u_0$ that becomes the root of an independent copy of $\bm{T}(\iota)$, and for $u_1$ we use $\bm{T}^{\kappa, \iota}$ instead of a copy of $\bm{T}^{\kappa}$. Hence $u_1, u_2, \ldots$ have type~$\kappa$, but $u_0$ has type~$\iota$.

\section{Boltzmann planar maps}

\label{sec:weigthedmaps}

We recall important background on Boltzmann planar maps~\cite{MR2509622} and the Bouttier--Di Francesco--Guitter transformation~\cite{MR2097335}. Our presentation follows closely that of~\cite[Sec. 5]{MR3769811}, with some additional emphasis in Section~\ref{sec:mobiles} on how the labels of a Boltzmann mobile may be constructed from conditionally independent choices for each vertex of the underlying Galton--Watson tree.

\subsection{The Boltzmann distribution on planar maps} The collection of all finite  planar maps with an oriented root edge and an additional marked vertex will be denoted by~$\cM$.  Throughout we let  $\bm{q} = (q_n)_{n \ge 1}$  denote a family of non-negative numbers such that $q_n >0$ for at least one $n \ge 3$. To any element $M \in \cM$ we  assign a weight
\begin{align}
	W_{\bm{q}}(M) = \prod_{f} q_{\mathrm{deg}(f)}.
\end{align}
Here the index $f$ ranges over the faces of the planar map $M$, and  $\mathrm{deg}(f)$ denotes the \emph{degree} of the face $f$. That is, $\mathrm{deg}(f)$ is  the number of half-edges on the boundary of the face $f$. (The reason why we count half-edges instead of edges is that an edge on the boundary has to be counted twice if both of its sides are incident to the face.) A weight-sequence $\bm{q}$ is said to be \emph{admissible}, if 
\begin{align}
	\label{eq:Zq}
	Z_{\bm{q}} := \sum_{M \in \cM} W_{\bm{q}}(M) < \infty.
\end{align}
In this case, we may form the \emph{Boltzmann distributed} (vertex marked) planar map $\mM$ with distribution given by
\begin{align}
	\Pr{\mM = M} = W_{\bm{q}}(M) / Z_{\bm{q}}, \qquad M \in \cM.
\end{align}
Likewise we may form analogously the Boltzmann planar map $\tilde{\mM}$ (and conditioned versions thereof) by using the class of maps without a marked vertex instead of $\cM$. Note that $\tilde{\mM}$ and $\mM$ follow different distributions, as $\mM$ is biased by the number of vertices.


\subsection{Mobiles obtained from branching processes}
\label{sec:mobiles}

A pointed map from $\cM$ is said to be positive, neutral, or negative, if the origin of the directed root edge is closer, equally far away, or farther away from the marked vertex than the destination of the root edge. We let $\cM^+$, $\cM^0$, and $\cM^-$ denote the corresponding subclasses of $\cM$, and form the sums $Z_{\bm{q}}^+$, $Z_{\bm{q}}^0$, and $Z_{\bm{q}}^-$ as in \eqref{eq:Zq}, but with the sum index constrained to the corresponding subclass.  For all $x,y\ge 0$ we define the bivariate series
\begin{align}
	\label{eq:do1}
	f^\bullet(x,y) &=  \sum_{k,k' \ge 0} \binom{2k + k'+1}{k+1} \binom{k+k'}{k} q_{2 + 2k + k'} x^k y^{k'}, \\
	\label{eq:do2}
	f^\diamond(x,y) &= \sum_{k, k' \ge 0} \binom{2k + k'}{k}  \binom{k+k'}{k} q_{1+2k+k'} x^k y^{k'}.
\end{align}
If the weight sequence $\bm{q}$ is admissible, we may define an irreducible $4$-type offspring distribution $\bm{\xi} = (\bm{\xi})_{1 \le i \le 4}$  as follows. Vertices of the first type produce a geometric number of vertices of the third type:
\begin{align}
	\Pr{\bm{\xi}_1 = (0,0,k,0)} = \frac{1}{Z_{\bm{q}}^+} \left( 1 - \frac{1}{Z_{\bm{q}}^+} \right)^k, \qquad k \ge 0.
\end{align}
Vertices of the second type always produce a single offspring vertex of the fourth type, that is
\begin{align}
	\Pr{\bm{\xi}_2 = (0,0,0,1)} = 1.
\end{align}
Vertices of the third and fourth type only produce offspring of the first or second type. Their coordinates ${\xi}_{3,1}, {\xi}_{3,2}$ and ${\xi}_{4,1}, {\xi}_{4,2}$ are determined  by
\begin{align}
	\label{eq:rr1}
	\Ex{x^{{\xi}_{3,1}} y^{{\xi}_{3,2}}}& = \frac{f^\bullet(x Z_{\bm{q}}^+, y\sqrt{Z_{\bm{q}}^0} )}{f^\bullet(Z_{\bm{q}}^+, \sqrt{Z_{\bm{q}}^0} )} \\
	\label{eq:rr2}
	\Ex{x^{{\xi}_{4,1}} y^{{\xi}_{4,2}}}& = \frac{f^\diamond(x Z_{\bm{q}}^+, y\sqrt{Z_{\bm{q}}^0} )}{f^\diamond(Z_{\bm{q}}^+, \sqrt{Z_{\bm{q}}^0} )}.
\end{align}
Here we have used that the denominators in~\eqref{eq:rr1} and~\eqref{eq:rr2} are finite. This follows from~\cite[Prop. 1]{MR2509622}, see Section~\ref{sec:regimes} below for details.

For a type $\kappa=1$ or $\kappa=2$ we consider the following sampling procedure. The result is a random $4$-type tree where the offspring is ordered and each vertex $v$ receives a label $\ell(v)$ with $\ell(v) \in \ndZ$ if $v$ has type $1$ or $3$, and $\ell(v) \in \frac{1}{2} + \ndZ$ otherwise.

\begin{enumerate}
	\item Consider the $\bm{\xi}$-Galton--Watson tree $\bm{T}(\kappa)$ that starts with a single vertex of type $\kappa$. We consider the offspring vertices as ordered in a uniformly selected manner.
	\item For each vertex $v$ of type $3$ or $4$ in $\bm{T}(\kappa)$ with outdegree $d \ge 1$ let $v_0$ denote its parent and let $v_1, \ldots, v_d$ denote its ordered offspring. For ease of notation, we set $v_{d+1} := v_0$. Note that $v_0, \ldots, v_d$ all have types in~$\{1,2\}$. Uniformly select a $(d+1)$-dimensional  vector \[\bm{\beta}_{\bm{T}(\kappa)}(v_0) =  (a_0, \ldots, a_{d})\] satisfying the following two conditions:
	\begin{enumerate}
		\item $\sum_{i=0}^d a_i = 0$.
		\item For all $0 \le i \le d$:
		\subitem If $v_i$ and $v_{i+1}$ both have type $1$, then $a_i \in \{-1, 0, 1, \ldots\}$.
		\subitem If $v_i$ and $v_{i+1}$ both have type $2$, then $a_i \in \{0, 1, 2, \ldots \}$.
		\subitem If $v_i$ and $v_{i+1}$ have different types, then $a_i \in \{-1/2, 1/2, 3/2, \ldots\}$.
	\end{enumerate}
	\item Assign to each vertex $v \in \bm{T}(\kappa)$ a label $\ell(v)$ in a unique way satisfying the following conditions.
	\begin{enumerate}
		\item The root of $\bm{T}(\kappa)$ receives label $0$ if it has type $1$ and label $1/2$ if it has type $2$.
		\item Vertices of type $3$ or $4$ receive the same label as their parent.
		\item If a vertex $v$ of type $3$ or $4$ has offspring $v_1, \ldots, v_d$ with $d \ge 1$ then set $(a_0, \ldots, a_d) := \bm{\beta}_{\bm{T}(\kappa)}(v_i)$ and set $\ell(v_i) := \ell(v) + \sum_{j=0}^{i-1} a_j$ for all $1 \le i \le d$.
	\end{enumerate}
\end{enumerate}

This construction produces a so-called \emph{mobile}. We emphasize that in the second step we choose for any vertex $v$ of type $3$ or $4$ the vector $\bm{\beta}_{\bm{T}(\kappa)}(v)$ at random in a way that depends only on the ordered list of offspring vertices of $v$, their types, and the type of $v$ (since it determines the type of its parent). In combinatorial language, $(\bm{T}(\kappa), \bm{\beta}_{\bm{T}(\kappa)})$ is a special case of an \emph{multi-type enriched plane tree}. We refer to it as the \emph{canonical decoration} of $\bm{T}(\kappa)$.

\subsection{The Bouttier--Di Francesco--Guitter transformation}
\label{sec:bdg}




We let $\bm{T}^+$ denote an independent copy of $(\bm{T}(1), \bm{\beta}_{\bm{T}(1)})$. We let $\bm{T}^0$ denote the result of taking two independent copies of $(\bm{T}(2), \bm{\beta}_{\bm{T}(2)})$ and identifying their roots. 
Let $(T, \beta)$ be a possible finite outcome of $\bm{T}^+$ or $\bm{T}^0$, and let $(\ell(v))_{v \in T}$ denote the corresponding labels. The Bouttier--Di Francesco--Guitter transformation~\cite{MR2097335} associates a planar map $\Psi(T, \beta)$ to the decorated tree $(T, \beta)$ in such a way that
\begin{itemize}
	\item the number of vertices of the map equals $1 + \#_1T$,
	\item the number of edges of the map equals $\#_1 T + \#_3 T + \#_4 T -1$,
	\item and the number of faces of the map equals $\#_3 T + \#_4 T$.
\end{itemize}

The transformation $\Psi$ is as follows. We draw $T$ in the plane and order the corners according to the standard contour process that starts at the root vertex. Let $v_1, \ldots, v_p$ denote the ordered list of vertices of type $1$ or $2$ that we visit in the contour process. That is, a vertex gets visited multiple types according to the number of angular sectors around it.  We let $\ell_1, \ldots, \ell_p$ denote their labels. We extend these  lists cyclically, so that $v_{ip+k} = v_{k}$ for $i \ge 1$ and $1 \le k \le p$. We add an extra vertex $r$ with type $1$ outside of $T$ and let its label $\ell(r)$ be one less than the  minimum of labels of all type $1$ vertices. For each $1 \le i \le p$ we draw an arc between the vertex $v_i$ and its \emph{successor}.  If $v_i$ has type $1$ then the successor is the next corner in the cyclic list of type $1$ with label $\ell_i - 1$. If there is no such corner, then we let $r$ be the successor of $v_i$. Likewise, if $v_i$ has type $2$ then the successor of $v_i$ is the next corner of type $1$ with label $\ell_i - 1/2$, or $r$ if there is no such corner. It is possible to draw all arcs so that they only may intersect at end points. We now delete the original edges of the tree $T$, as well as all vertices of type $3$ and $4$. Vertices of type $2$ get erased as well, merging the corresponding pairs of arcs. We are left with a planar map having a marked vertex $r$. If the root of $T$ has type $1$ we let the root edge be the first arc that was drawn and have it point to the root of $T$. If the root of $T$ has type $2$ (and hence has precisely two children, both of type $4$), we let the root edge be the result of the merger of the two arcs incident to the root of $T$ and let it point towards the successor of the first corner  encountered in the contour process. Figure~\ref{fi:transfo} illustrates the transformation $\psi$ for an example.

\begin{figure}
	\centering
	\begin{minipage}{1.0\textwidth}
		\subfloat[A mobile from which we are going to construct a vertex-marked rooted planar map.]{
			\includegraphics[width=0.45\textwidth]{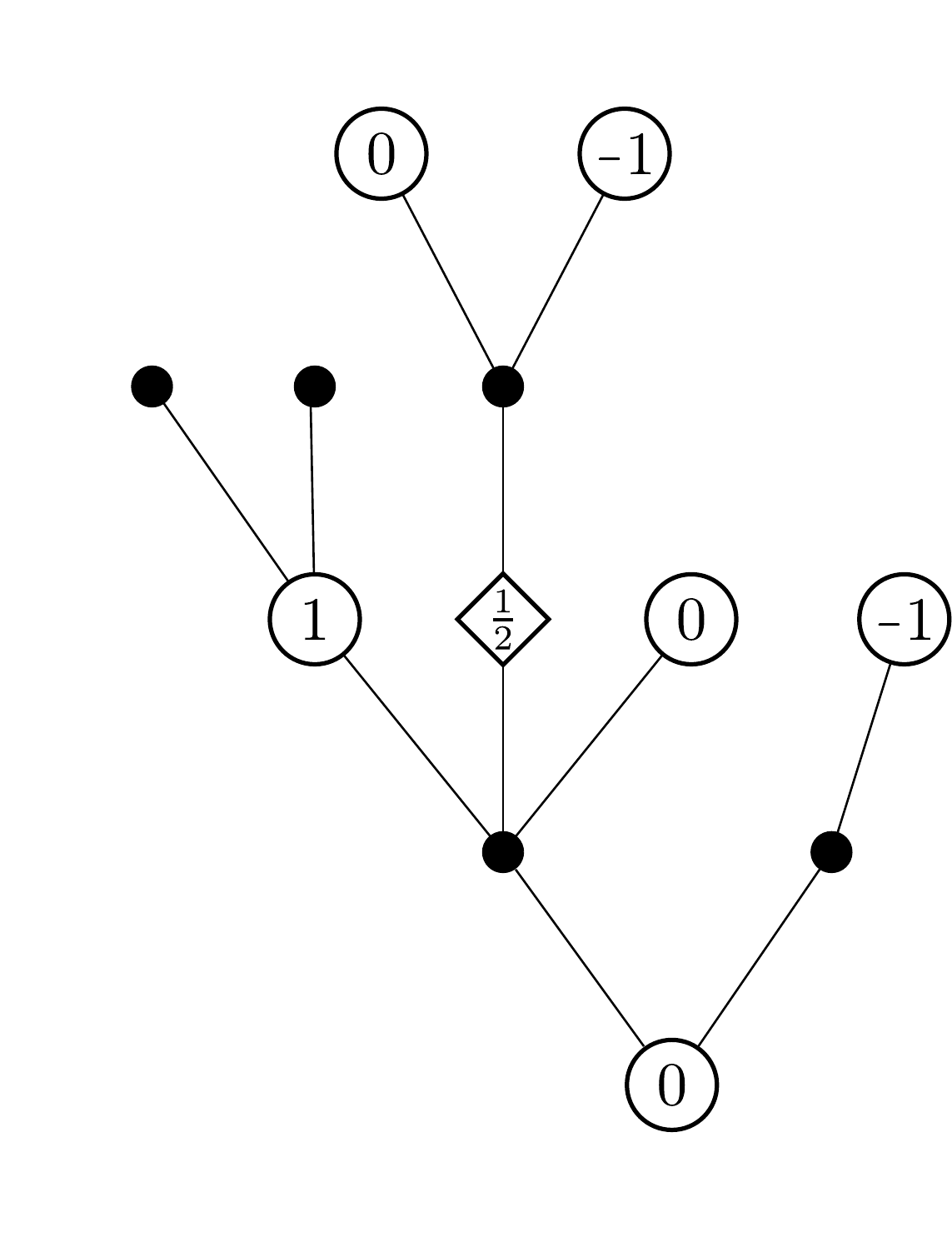}
		}
		\hspace{2em}
		\subfloat[Listing the corners incident to vertices of type $1$ and $2$. Adding a marked  vertex.]{
			\includegraphics[width=0.45\textwidth]{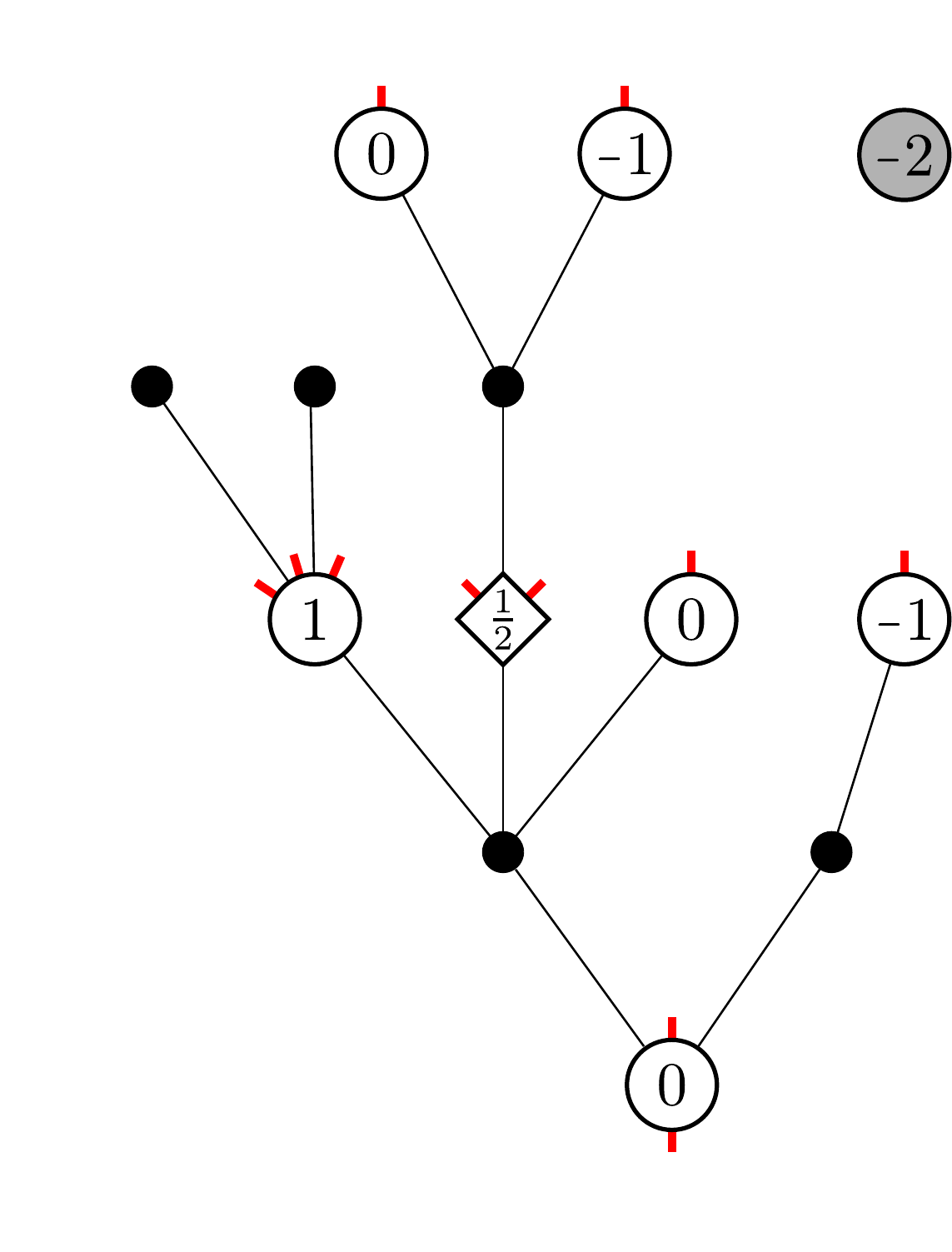}
		}
		\\
		\subfloat[Drawing arcs and distinguishing an oriented root edge.]{ 
			\includegraphics[width=0.45\textwidth]{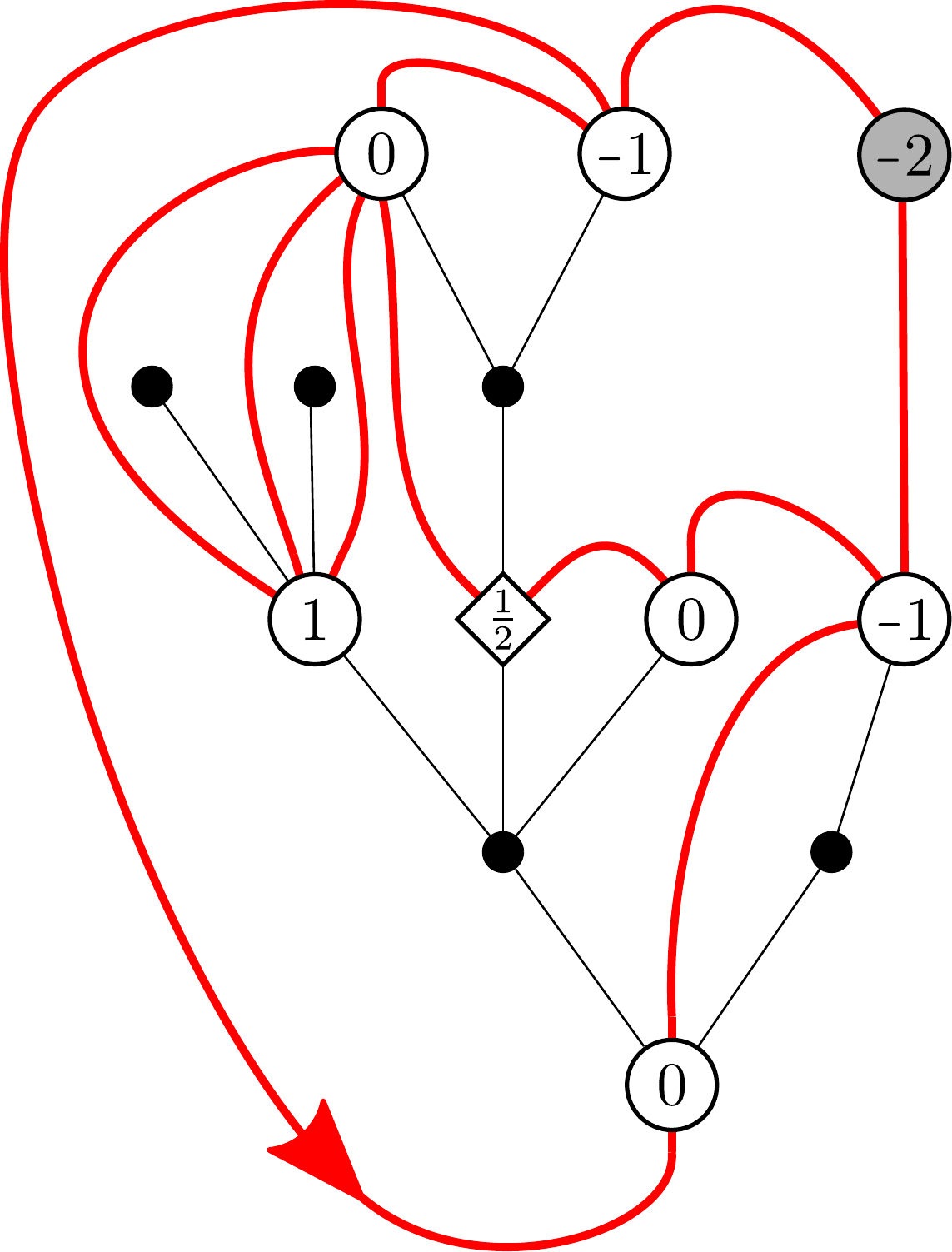}
		}
		\hspace{2em}
		\subfloat[Removing old edges, removing type $3$ and type $4$ vertices, and replacing type $2$ vertices by arcs.]{
			\includegraphics[width=0.45\textwidth]{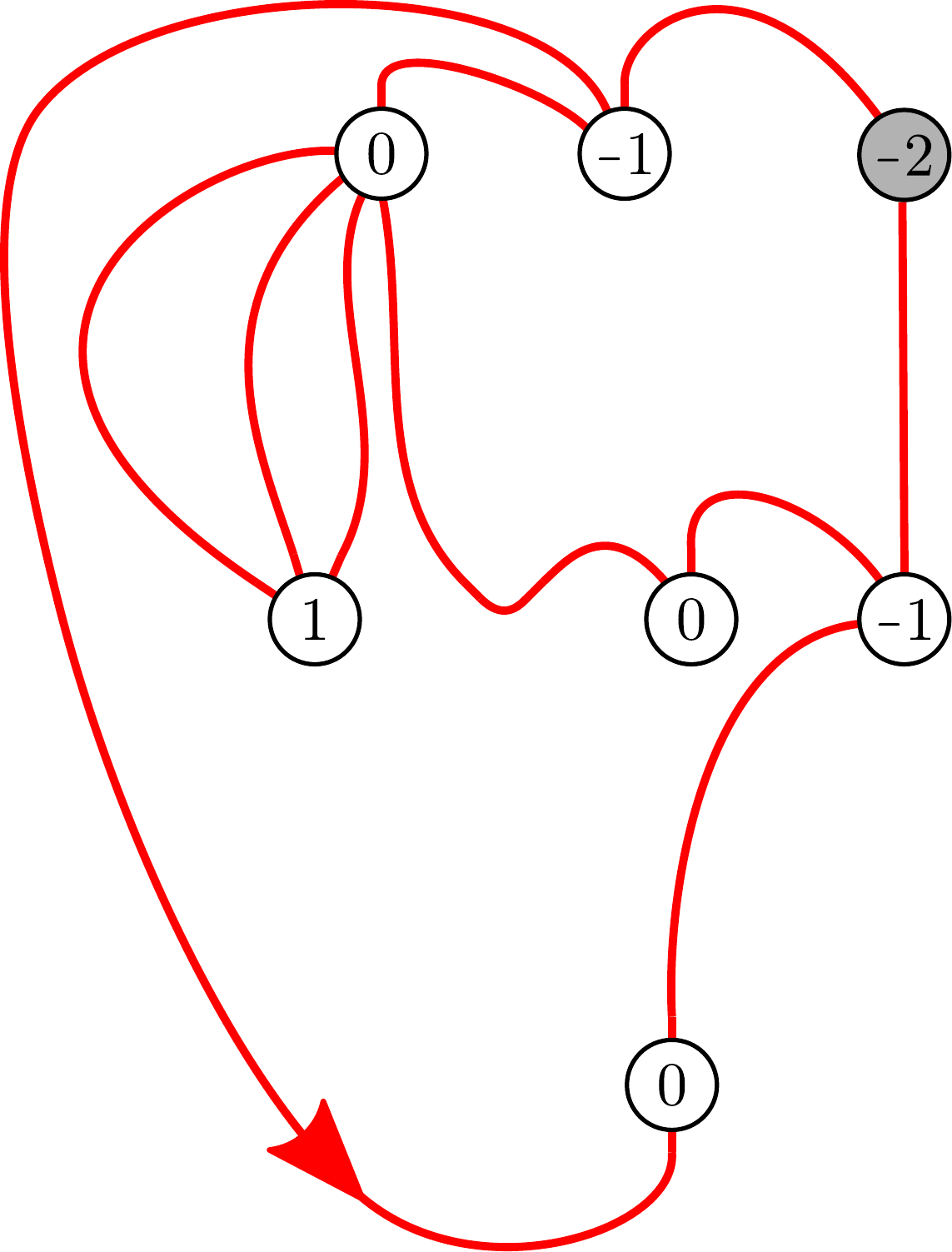}
		}
		\caption{The correspondence between mobiles and vertex-marked rooted planar maps.}
		\label{fi:transfo}
	\end{minipage}
\end{figure}

The Boltzmann distributed map $\mM$ is a mixture of the random maps $\mM^+$, $\mM^0$, and $\mM^-$ obtained by conditioning $\mM$ on belonging to $\cM^+$, $\cM^0$, and $\cM^-$.  As observed by \cite{MR2509622}, it holds that $\Psi(\bm{T}^+) \eqdist \mM^+$ and $\Psi(\bm{T}^0) \eqdist \mM^0$. Moreover, $\mM^{-}$ may be obtained from $\mM^+$ by reversing the direction of the root edge.

\subsection{Regimes of weight sequences}
\label{sec:regimes}

\cite[Prop. 1]{MR2509622} showed that the weight sequence $\bm{q}$ is admissible if and only if the system of equations
\begin{align}
	\label{eq:tc1}
	f^\bullet(x,y) &= 1 - \frac{1}{x} \\
	\label{eq:tc2}
	f^\diamond(x,y) &= y
\end{align}
has a solution $(x,y)$ with $x>1$ such that the matrix
\[
\begin{pmatrix}
	0 & 0 & x-1 \\
	\frac{x}{y}\partial_x f^\diamond(x,y) & \partial_y f^\diamond(x,y) & 0 \\
	\frac{x^2}{x-1} \partial_x f^\bullet(x,y) & \frac{xy}{x-1} \partial_y f^\bullet(x,y) & 0 \\
\end{pmatrix}
\]
has spectral radius smaller or equal to one. Any such solution $(x,y)$ necessarily satisfies 
\begin{align}
	(x,y) = (Z_{\bm{q}}^+, \sqrt{Z_{\bm{q}}^0}).
\end{align}
\cite[Def. 1]{MR2509622} termed an admissible weight sequence $\bm{q}$  \emph{critical}, if the spectral radius of this matrix is equal to $1$. This amounts to the condition
\begin{align}
	\label{eq:crit}
	x^2 J_f(x, y) +1 = x^2 \partial_x f^\bullet(x, y) + \partial_y f^\diamond(x, y),	
\end{align}
with $J_f$ denoting the (signed) Jacobian of the function $(f^\bullet, f^\diamond): \ndR_+^2 \to (\ndR_+ \cup \{\infty\})^2$.
It is termed \emph{regular critical}, if additionally 
\begin{align}
	f^\bullet( Z_{\bm{q}}^+ + \epsilon, y\sqrt{Z_{\bm{q}}^0} + \epsilon) < \infty
\end{align}
for some $\epsilon>0$. As was made explicit by~\cite{MR3769811}, this applies to various useful cases such as unrestricted maps or $p$-angulations for arbitrary $p \ge 3$. The irreducible offspring distribution $\bm{\xi}$ is critical (or regular critical) if and only if the weight sequence $\bm{q}$ is critical (or regular critical).  

\section{Quenched local convergence}

\begin{theorem}
	\label{te:regcritquench}
	Suppose that the weight-sequence $\bm{q}$ is regular critical. Let $\mM_n$ denote the $\bm{q}$-Boltzmann planar map, conditioned on either having $n$ vertices, or edges, or faces. Let $u_n$ denote either a uniformly selected vertex, half-edge, or face. There are integers $a \ge 0$ and $d \ge 1$ and a random infinite locally finite limit map $\hat{\mM}$ with finite face degrees such that, in the local topology for vertex-rooted or half-edge rooted or face-rooted planar maps, the conditional law $\mfL( (\mM_n, u_n) \mid \mM_n)$ satisfies
	\begin{align}
		\label{eq:quenched}
		\mfL( (\mM_n, u_n) \mid \mM_n) \convp \mathfrak{L}(\hat{\mM})
	\end{align}
	as $n \in a + d \ndZ$ tends to infinity.
\end{theorem}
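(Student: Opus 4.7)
The plan is to combine the Bouttier--Di Francesco--Guitter (BDG) bijection recalled in Section~\ref{sec:bdg} with the annealed limit of~\cite{MR3769811} and the quenched rerooting limits for multi-type Galton--Watson trees from~\cite{multrerooted}. By the projective-limit structure of the local topology on $\overline{\mathfrak{M}}$ and the fact that each event $\{U_k(\cdot)=M\}$ is clopen, the convergence~\eqref{eq:quenched} is equivalent to showing that, for every integer $k \ge 0$ and every $M \in \mathfrak{M}_k$,
\begin{align*}
\Pr{U_k(\mM_n, u_n) = M \mid \mM_n} \convp \Pr{U_k(\hat{\mM}) = M}.
\end{align*}
Conditional Jensen then reduces the task to the annealed statement (already proved by Stephenson) together with the vanishing of the conditional variance, which in turn is equivalent to the two-point statement that, if $u_n, u_n'$ are two conditionally independent uniform picks given $\mM_n$, then $(U_k(\mM_n, u_n), U_k(\mM_n, u_n'))$ jointly converges toward $\mfL(U_k(\hat{\mM}))^{\otimes 2}$.

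First, I would use the BDG correspondence of Section~\ref{sec:bdg} to recast the problem in terms of labeled mobiles. Under $\Psi$, the numbers of vertices, edges, and faces of $\mM_n$ are affine functions of $(\#_1\bm{T}, \#_3\bm{T}, \#_4\bm{T})$, so each of the three conditionings on $\mM_n$ corresponds to conditioning $\bm{T}^+$ or $\bm{T}^0$ on an event of the form $|\bm{T}|_{\bm{\gamma}} = n + c$ for a suitable $\bm{\gamma} \in \ndR^4$; the integers $a$ and $d$ in the statement arise from the additive span of $|\bm{T}|_{\bm{\gamma}}$ on the support of $\bm{\xi}$. Similarly, a uniform vertex, half-edge, or face of $\mM_n$ is mapped, up to an $o(1)$ boundary correction coming from the marked vertex $r$ and the root, to a uniform vertex whose type lies in a non-empty subset $\mathfrak{G}_0 \subset \{1,2,3,4\}$ of the conditioned mobile. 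For the half-edge case, uniform corners of type-$1$ or type-$2$ vertices can be realized by first picking the vertex with probability proportional to its outdegree plus one and then a uniform corner; the induced size-biasing falls within the reach of the quenched framework of~\cite{multrerooted}.

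Next, I would apply the quenched rerooting limits of~\cite{multrerooted} to the $\bm{\xi}$-Galton--Watson tree $\bm{T}_n$ conditioned on the appropriate linear constraint. This yields quenched convergence of $(\bm{T}_n, v_n)$ toward one of the sin-trees $\hat{\bm{T}}(\kappa)$ or $\hat{\bm{T}}(\kappa,\iota)$, and in particular asymptotic independence of the pair $(v_n, v_n')$. Conditional on the unlabeled tree, the decoration is a product of independent random vectors, with each $\bm{\beta}_{\bm{T}}(v)$ depending only on the types of $v$ and of its parent and children; the induced labels $(\ell(v))_v$ are then obtained by a deterministic summation along ancestor paths, and are therefore determined, up to a single global shift, by the local decorations. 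This locality lets the quenched and two-point tree convergences pass verbatim to the labeled mobiles, with the limit carrying the canonical decoration.

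Finally, I would transport the result back to planar maps through $\Psi$. The principal difficulty is that $\Psi$ is not a local transformation: the arc issued at a corner $c$ is drawn to its \emph{successor}, the next corner of the contour of appropriate type carrying a prescribed label, and this successor could a priori be far away in $\bm{T}$. I would handle this exactly as in~\cite[Sec.~5]{MR3769811}, by observing that on the infinite spine of $\hat{\bm{T}}(\kappa)$ (respectively $\hat{\bm{T}}(\kappa,\iota)$) the labels perform a centered random walk whose running minimum almost surely tends to $-\infty$. Hence the successor of any corner in a fixed-size neighborhood of the marked vertex lies, with probability tending to one, within an $R = R(k, \varepsilon)$-neighborhood, so that $U_k(\Psi(\bm{T}_n, \bm{\beta}), u_n)$ is measurable with respect to the $R$-extended fringe subtree at $v_n$ up to an event of probability at most $\varepsilon$. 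Applying this localization to both marks $u_n$ and $u_n'$ simultaneously, the two-point tree convergence of the previous step yields the desired two-point map convergence, hence the vanishing of the conditional variance, and the theorem follows.
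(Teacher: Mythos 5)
Your skeleton matches the paper's: BDG bijection, the quenched rerooting limits for conditioned multi-type Galton--Watson trees from \cite{multrerooted}, the key lemma that the labels along the backwards spine form a centred random walk with no lower bound, and a localization step to push the tree limit through the (non-local) transformation $\Psi$; your reformulation of the quenched statement as ``annealed plus vanishing conditional variance, i.e.\ a two-point estimate'' is a legitimate and essentially equivalent packaging of the paper's direct continuous-mapping transfer of conditional laws. However, one step as written does not go through: the neutral component. The Boltzmann map $\mM$ is a mixture of $\mM^+$, $\mM^0$, $\mM^-$, and $\mM^0$ corresponds under $\Psi$ to $\bm{T}^0$, obtained by gluing two independent copies of $(\bm{T}(2),\bm{\beta}_{\bm{T}(2)})$ at their roots. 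This glued tree is \emph{not} a $\bm{\xi}$-Galton--Watson tree (its root has two type-$4$ children instead of one), so you cannot simply ``apply the quenched rerooting limits of \cite{multrerooted} to the $\bm{\xi}$-Galton--Watson tree conditioned on the appropriate linear constraint'' in this case. An extra argument is needed; the paper conditions the two copies on $|\bm{S}^{(1)}|_{\bm{\gamma}}+|\bm{S}^{(2)}|_{\bm{\gamma}}=n$ and uses the regularly varying (index $-3/2$) behaviour of $\mfL(|\bm{T}(2)|_{\bm{\gamma}})$, via Gibbs-partition results \cite{doi:10.1002/rsa.20771}, to show that the smaller copy is $O_p(1)$, so that all but a negligible fraction of marked vertices lie in the giant copy and the quenched tree limit is unchanged.

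A second concrete problem is your treatment of the half-edge marking. Picking a type-$1$ or type-$2$ vertex with probability proportional to its outdegree plus one and then a uniform corner gives a uniform corner of the mobile, and these corners are in bijection with the \emph{arcs} drawn in the BDG construction, not with edges or half-edges of the map: the two arcs at each type-$2$ vertex are merged into a single edge, so edges arising from type-$2$ vertices are hit twice as often as edges arising from type-$1$ corners. This is a genuinely different (degree-biased) marking and would produce a different local limit; handling the ``induced size-biasing'' inside \cite{multrerooted} does not repair the mismatch with the statement, which concerns uniform half-edges. The correct correspondence, which the paper uses, is that vertices of type in $\{1,3,4\}$ correspond bijectively (up to $O(1)$ boundary terms) to edges of the map, so a uniform half-edge is modelled by a uniform vertex with type in $\mathfrak{G}_0=\{1,3,4\}$ together with an independent fair coin for the orientation. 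Finally, a minor inaccuracy: the annealed limit is ``already proved by Stephenson'' only for the root corner, i.e.\ the half-edge case; for vertex- and face-rootings the one-point convergence must itself be extracted from the tree limits, which your argument does produce once the two issues above are fixed.
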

Of course, the limit object differs depending on which conditioning we choose and which type of marking we select. The quenched limit~\eqref{eq:quenched} implies the annealed convergence
\begin{align}
	\label{eq:annealed}
	(\mM_n, u_n) \convdis \hat{\mM}
\end{align}
by dominated convergence. If $u_n$ denotes a uniformly selected half-edge, then \eqref{eq:annealed} is the annealed convergence established by~\cite[Thm. 6.1]{MR3769811} (see also \cite{MR2013797,2005math.....12304K,MR3183575,MR3083919,MR3256879}), who only required criticality in the case where $\mM_n$ is the Boltzmann map with $n$ vertices. \cite{DRMOTA2020108666} described a general method for deducing limits for the vicinity of random vertices if a limit for the vicinity of a random corners is known. The method applies to regular critical Boltzmann planar maps and other settings. Obtaining an explicit description of the limit was left as an open question in \cite{DRMOTA2020108666}, and the construction of the limit from an infinite mobile with a backwards growing spine the proof of Theorem~\ref{te:regcritquench} resolves this question in the present setting.

Note that, as was shown by~\cite[Sec. 6.3.5]{MR3769811}, in the present setting the total variational distance between $\mM_n$ (a corner-rooted map with an additional marked vertex, not to be confused with $u_n$) and a $\bm{q}$-Boltzmann map $\tilde{\mM}_n$ without a marked vertex tends to zero as $n$ becomes large:
\begin{align}
	\label{eq:deroot}
	\lim_{n \to \infty} d_{\textsc{TV}}(\mM_n, \tilde{\mM}_n) = 0.
\end{align}
Hence Theorem~\ref{te:regcritquench} also holds for~$\tilde{\mM}_n$.

\subsection{Proof strategy}
The existence of $a \ge 0$ and $d \ge 1$ (which depend on the form of conditioning we use), so that $\mM_n$ is well-defined for $n \in a + d\ndZ$ large enough, was shown by~\cite[Lem. 6.1]{MR3769811}. Let $\bm{\gamma}\in \ndN_0^4$ be either $(1,0,0,0)$ or $(1,0,1,1)$ or $(0,0,1,1)$, depending on whether we condition on the number of vertices, edges, or faces. We also set $\mathfrak{G}_0 = \{1\}$ or $\mathfrak{G}_0 = \{1,3,4\}$ or $\mathfrak{G}_0 = \{3,4\}$ accordingly.

Recall that $\bm{T}^+$ denotes an independent copy of $(\bm{T}(1), \bm{\beta}_{\bm{T}(1)})$, and  $\bm{T}^0$ is the result of taking two independent copies of $(\bm{T}(2), \bm{\beta}_{\bm{T}(2)})$ and identifying their roots.
Recall also that the Boltzmann distributed map $\mM$ is a mixture of the random maps  $\Psi(\bm{T}^+) \eqdist \mM^+$, $\Psi(\bm{T}^0) \eqdist \mM^0$, and the result $\mM^{-}$ of reversing the direction of the root-edge $\mM^+$.

Throughout the entire proof, a subscript $n$ of a random tree  denotes that we condition the tree on the event $|\cdot|_{\bm{\gamma}} = n$ if $\bm{\gamma} =  (0,0,1,1)$,  $|\cdot|_{\bm{\gamma}} = n-1$ if $\bm{\gamma} = (1,0,0,0)$, and $|\cdot|_{\bm{\gamma}} = n+1$ if $\bm{\gamma} = (1,0,1,1)$. A subscript $n$ of a random map will  denote that we condition the map accordingly on having $n$ faces or vertices or edges.

Let $\kappa \in \{1, \ldots, 4\}$ be a type. If we select a vertex $v_n$ from $\bm{T}_n(\kappa)$ with type in $\mathfrak{G}_0$ uniformly at random, then by~\cite[Thm. 6]{multrerooted} 
\begin{align}
	\label{eq:untree}
	\mfL( (\bm{T}_n(\kappa), v_n) \mid \bm{T}_n(\kappa)) \convp \mathfrak{L}(\hat{\bm{T}}(\eta))
\end{align}
for a random type $\eta$ that only depends on $\bm{\xi}$ and $\bm{\gamma}$ (and not on $\kappa$). Adding canonical decorations, this implies
\begin{align}
	\label{eq:entree}
	\mfL( (\bm{T}_n(\kappa), \bm{\beta}_{\bm{T}_n(\kappa)}, v_n) \mid \bm{T}_n(\kappa)) \convp \mathfrak{L}(\hat{\bm{T}}(\eta),\bm{\beta}_{\hat{\bm{T}}(\eta)}).
\end{align}
(See also \cite{zbMATH07235577} for a general theory of limits and fringe distributions of random decorated or enriched trees.)

We are going to show that:
\begin{enumerate}[\qquad a)]
	\item The decorated tree $(\hat{\bm{T}}(\eta),\bm{\beta}_{\hat{\bm{T}}(\eta)})$ corresponds to an infinite vertex-, corner-, or face-rooted map $\hat{\mM}$ via an extension of the Bouttier--Di Francesco--Guitter transformation.
	\item The convergence \eqref{eq:entree}  implies
	\begin{align}
		\label{eq:togo}
		\mfL( (\mM_n^+, u_n) \mid \mM_n^+) \convp \mathfrak{L}(\hat{\mM}).
	\end{align}
	\item Convergence of $\mM_n^-$ follows from \eqref{eq:togo}  and $\mM_n^0$ may be treated analogously as $\mM_n^+$.
\end{enumerate} 

Having these intermediate results at hand, Theorem~\ref{te:regcritquench} immediately follows. In the following subsection, we verify the three claims individually.

\subsection{Claim a)}
In the third step of the procedure given in Section~\ref{sec:mobiles} we described a process for transforming the decorations into labels. We cannot apply this process directly to $(\hat{\bm{T}}(\eta),\bm{\beta}_{\hat{\bm{T}}(\eta)})$ since the tree has an infinite backwards growing spine of ancestors instead of a root. However, if we assign any valid label to a single vertex $v$  (with value in $\ndZ$ if $v$ has type $1$ or $3$ and value in $\frac{1}{2} + \ndZ$ if $v$ has type $2$ or $4$), then the decorations determine the labels of all other vertices. Moreover, the differences in the labels between any pair of vertices does not depend on the label we started with. Hence let us assign a valid label $0$ or $1/2$ to the marked vertex of $(\hat{\bm{T}}(\eta),\bm{\beta}_{\hat{\bm{T}}(\eta)})$ (depending on whether its type $\eta$ lies in $\{1,3\}$ or $\{2,4\}$), and extend this in a unique way according to the decorations to labels~$(\ell(v))_{v \in \hat{\bm{T}}(\eta)}$. 


\begin{lemma}
	The labels of the type $1$ ancestors of the marked vertex in $\hat{\bm{T}}(\eta)$ have almost surely no lower bound.
\end{lemma}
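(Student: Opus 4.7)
The plan is to interpret the labels along the spine as the partial sums of a centred i.i.d.\ random walk, apply a classical oscillation theorem, and then transfer the conclusion to the type-$1$ ancestors. Assign to $u_0$ its canonical valid label ($0$ or $1/2$ depending on its type) and extend the labelling to the entire tree via the decorations. Set $Y_k := \ell(u_{k-1}) - \ell(u_k)$ for $k \ge 1$. By the construction of $\hat{\bm{T}}(\eta)$, the blocks sitting between consecutive spine vertices are independent copies of $\hat{\bm{T}}^\kappa$, where $\kappa$ denotes the common type of $u_1, u_2, \ldots$ (the initial block between $u_0$ and $u_1$ may have a slightly different law when $\eta \neq \kappa$, but this affects only $Y_1$). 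Since $Y_k$ is determined by its own block alone, $(Y_k)_{k \ge 2}$ is an i.i.d.\ integer-valued sequence and $\ell(u_k) = \ell(u_0) - \sum_{j=1}^{k} Y_j$.

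The quantitative heart of the argument is the identity $\Ex{Y_k}=0$. This is where the criticality condition~\eqref{eq:crit} enters: since every decoration vector around a type-$3$ or type-$4$ vertex is sampled uniformly among valid compositions summing to zero, the expected label contribution of each outgoing edge of the mobile can be expressed through partial derivatives of $f^\bullet$ and $f^\diamond$ evaluated at $(Z_{\bm{q}}^+, \sqrt{Z_{\bm{q}}^0})$. Summing these contributions along a block and undoing the size-biasing inherent to $\hat{\bm{T}}^\kappa$ reduces $\Ex{Y_k}$ to a combination of partial derivatives that vanishes precisely under~\eqref{eq:crit}. Non-degeneracy of $Y_k$ is immediate from the fact that $q_n>0$ for some $n \ge 3$, which allows the decorations to take nonzero values with positive probability. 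Regularity of $\bm{q}$ yields exponential moments for the offspring distribution and in turn a finite second moment for $Y_k$.

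With $(Y_k)$ centred, non-degenerate, and of finite variance on a lattice, the classical oscillation theorem (or Chung--Fuchs recurrence) gives $\liminf_k \sum_{j \le k} Y_j = -\infty$ almost surely, so $\ell(u_k) \to -\infty$ along a subsequence. To transfer this to type-$1$ ancestors, a short case analysis on $\kappa \in \{1,2,3,4\}$ shows that within bounded distance of each $u_k$ there is always a type-$1$ ancestor of $u_0$ whose label differs from $\ell(u_k)$ by at most a bounded amount: for instance if $\kappa=1$ the $u_k$ themselves are type~$1$, while if $\kappa=3$ the parent of $u_k$ is of type~$1$ with label exactly $\ell(u_k)$ (since types~$3$ and $4$ inherit the label of their parent). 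Extracting a subsequence, the labels of type-$1$ ancestors of $u_0$ tend to $-\infty$ almost surely.

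The main obstacle is the verification $\Ex{Y_k}=0$: the four-type combinatorics of the decorations require careful bookkeeping to show that the critical condition~\eqref{eq:crit} is precisely what forces centredness of the spine increment. Everything beyond that reduces to textbook random walk theory.
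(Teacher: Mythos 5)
There is a genuine gap, and it sits exactly where your sketch leans on two unproven claims. First, the transfer from the spine to the type-$1$ ancestors does not work as stated. In $\hat{\bm{T}}(\eta)$ the spine type is the random type $\eta$, which (depending on the marking) can be $3$ or $4$; for $\eta=1$ or $\eta=3$ your observation is fine, but for $\eta=4$ the parent of each spine vertex $u_k$ is a type-$2$ vertex (type-$4$ vertices are the unique children of type-$2$ vertices), and the nearest type-$1$ ancestor above it sits beyond a type-$3$/$4$ vertex whose decoration shifts the label by an amount that is neither deterministically bounded nor at bounded distance; so the claim ``within bounded distance of each $u_k$ there is a type-$1$ ancestor whose label differs by a bounded amount'' is false in that case, and the conclusion about type-$1$ ancestors does not follow from oscillation of $(\ell(u_k))_k$ alone. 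The paper's proof avoids this entirely through the identity $\hat{\bm{T}}(\eta) \eqdist \hat{\bm{T}}(1,\eta)$ (obtained from \cite[Thm.~2, Rem.~2]{multrerooted}), which replaces the spine by a spine of type-$1$ vertices, so that the labels of the type-$1$ ancestors themselves form a random walk with i.i.d.\ increments distributed as the root-to-marked-leaf label difference in $\hat{\bm{T}}^1$. This rerooting identity is the missing idea in your argument.

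Second, the centredness $\Ex{Y_k}=0$, which you yourself call the main obstacle, is only sketched: the proposed ``undo the size-biasing and sum partial derivatives of $f^\bullet,f^\diamond$'' computation is not carried out, and it would in any case have to be done for a spine of type $\eta\in\{3,4\}$, for which no such statement is available. The paper does not perform this computation either; instead it identifies the type-$1$ step distribution with the label increments along the spine of Stephenson's local limit $\tilde{\bm{T}}$ of $\bm{T}_n(1)$ and quotes \cite[Proof of Lem.~6.5]{MR3769811} for mean zero, then applies \cite[Thm.~9.2]{zbMATH01713116}; note the latter needs only a nondegenerate mean-zero step, so your detour through finite second moments (itself only asserted) is unnecessary. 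To repair your proof you would need both the rerooting identity $\hat{\bm{T}}(\eta)\eqdist\hat{\bm{T}}(1,\eta)$ (or an equivalent reduction to a type-$1$ spine) and an actual proof or citation for the zero-mean property of the increment.
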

\begin{proof}
	First, let us observe that
	\begin{align}
		\label{eq:add1}
		\hat{\bm{T}}(\eta) \eqdist \hat{\bm{T}}(1,\eta).
	\end{align}
	This could be verified directly, or as follows: The limit in Equation~\eqref{eq:untree} is a special case of~\cite[Thm. 6]{multrerooted}, which was obtained as an application of the more general theorem~\cite[Thm. 1]{multrerooted}. We could  just as well have applied~\cite[Thm. 2, Rem. 2]{multrerooted} instead, yielding that~\eqref{eq:untree} holds with $\hat{\bm{T}}(1,\eta)$ instead of $\hat{\bm{T}}(\eta)$. This verifies~\eqref{eq:add1}.
	
	Let $u_1, u_2, \ldots$ denote the list of type $1$ ancestors of the marked vertex in $\hat{\bm{T}}(\eta)$ (excluding the marked vertex itself, if it has type $1$), so that $u_{i+1}$ is an ancestor of $u_i$ for all $i \ge 1$. Then the family of differences of labels $\ell(u_{i+1}) - \ell(u_i)$, $i \ge 1$ are independent and identically distributed. The distribution is given by forming the canonical decoration of $\hat{\bm{T}}^1$, assigning labels accordingly with an arbitrary starting value for the root of $\hat{\bm{T}}^1$, and forming the difference of the labels between the root and the marked leaf of  $\hat{\bm{T}}^1$. Thus, the labels $(\ell(u_i))_{i \ge 1}$ form a random walk  with i.i.d. steps and a random starting value $\ell(u_1)$. 
	
	It is known that this random walk is centred: Indeed, consider the local weak limit $\tilde{\bm{T}}$  of $\bm{T}_n(1)$ established by~\cite{MR3769811}, that describes the asymptotic vicinity of the root (and not a random location) of $\bm{T}_n(1)$. The construction of $\tilde{\bm{T}}$ is as follows. We start with  a type $1$ vertex that gets identified with an independent copy of  $\hat{\bm{T}}^1$. All non-marked type $1$ leaves become roots of independent copies of $\bm{T}(1)$. For the marked leaf, we proceed recursively in the same way as for the root (identifying it with the root of a fresh independent copy of $\hat{\bm{T}}^1$, and so on). Hence $\bm{T}(1)$ has an infinite spine, obtained by concatenating independent copies of $\bm{T}(1)$. In particular, if we form the canonical decoration of $\tilde{\bm{T}}$  and assign labels accordingly (with, say, a starting value $0$ for the root vertex), then the labels of the type $1$ vertices of the spine form a random walk with i.i.d. steps and the \emph{same} step distribution as for the random walk $(\ell(u_i))_{i \ge 1}$. \cite[Proof of Lem. 6.5]{MR3769811} showed that this step distribution has average value $0$. Hence, analogously as for \cite[Lem. 6.5]{MR3769811}, it follows from~\cite[Thm. 9.2]{zbMATH01713116} that almost surely
	\[
	\inf_{i \ge 1} \ell(u_i) = -\infty.
	\]
	This completes the proof.
\end{proof}

We may order the corners $(c_i)_{i \in \ndZ}$ incident to vertices of type $1$ or $2$ of $\hat{\bm{T}}(\eta)$ such that for all $i \in \ndZ$ the corner $c_{i+1}$ is the successor of $c_i$ in the clock-wise contour exploration.
This allows us to  canonically  extend the Bouttier--Di Francesco--Guitter transformation from Section~\ref{sec:bdg} to assign an infinite locally finite planar map $\hat{\mM}$ to the infinite labelled tree $(\hat{\bm{T}}(\eta), (\ell(v))_{v \in \hat{\bm{T}}(\eta)})$. Here we do not have to add an additional marked vertex, because the labels of type $1$ vertices along the backwards growing spine of $\hat{\bm{T}}(\eta)$ have no lower bound. By construction, all faces of $\hat{\mM}$ have finite degree. 

Depending on whether $u_n$ is a random vertex, half-edge, or face of $\mM_n$, we mark $\hat{\mM}$ as follows. Let $w$ denote the marked vertex of $\hat{\bm{T}}(\eta)$, which has an infinite number of ancestors. In the vertex case, $w$ has type $1$ and corresponds canonically to a vertex of $\hat{\mM}$. We consider $\hat{\mM}$ as rooted at this vertex. In the face case, $w$ has type $3$ or $4$ and corresponds canonically to a face. In this case, we consider $\hat{\mM}$ as rooted at this face. In the half-edge case, $w$ has type $1$, $3$, or $4$ and corresponds canonically to an edge, which we orient according to an independent fair coin flip. In detail: If $w$ has type $4$, then it is the only child of a non-root type $2$ vertex that corresponds to the edge obtained by joining the arcs drawn at its two corners. Hence $w$ corresponds canonically to this edge. If $w$ has type $1$, then each of its corners corresponds to the edge we drew when visiting this corner in the contour exploration. The number of these corners equals $1$ plus the number of offspring vertices, all of which have type $3$. Hence  $w$ and its children correspond bijectively to the arcs we drew starting at a corner of $w$. In particular, $w$ corresponds canonically to an arc. Likewise, if $w$ has type $3$ it also corresponds canonically to an edge that we drew starting at a corner of its type $1$ parent.

This verifies Claim a).

\subsection{Claim b)}
Suppose that $\kappa=1$. The vertex $v_n$ of $(\bm{T}_n(\kappa), \beta_{\bm{T}_n(\kappa)})$ corresponds similarly to a marked vertex or face or half-edge $u_n'$ of~$\mM_n^+$. Modifications in the correspondence may be required when $v_n$ or its parent is the root of $\bm{T}_n(\kappa)$, but the probability for this event tends to zero and hence we may safely ignore this. Furthermore, $(\mM_n^+, u_n)$ and $(\mM_n^+, u_n')$ may not follow the same distribution (for example, when $u_n$ is a uniform vertex, then $u_n'$ is a uniform \emph{non-marked} vertex, as $u_n'$ is never equal to the additional vertex we added in the BDFG bijection). However, it is clear that there is an event (that depends on $n$) whose probability tends to $1$ as $n$ becomes large, such that $(\mM_n^+, u_n)$ and $(\mM_n^+, u_n')$ are identically distributed when conditioned on this event. Hence we may also safely ignore the difference between $u_n$ and $u_n'$. Using the continuous mapping theorem, it hence follows from~\eqref{eq:entree} that 
\begin{align}
	\label{eq:same}
	\mfL( (\mM_n^+, u_n) \mid \mM_n^+) \convp \mathfrak{L}(\hat{\mM}).
\end{align}
This verifies Claim b).

\subsection{Claim c)} The same convergence as in~\eqref{eq:same} follows immediately for $\mM_n^-$, since the vicinity of a random point is not affected by the orientation of the root edge. As for $\mM_n^0$,  it follows from~\cite[Prop 2.2]{MR3769811} that $|\bm{T}(2)|_{\bm{\gamma}}$ takes only values from a shifted lattice, and has a density that varies regularly with index $-3/2$ along that shifted lattice. It follows that if we condition independent copies $\bm{S}^{(1)}$ and $\bm{S}^{(2)}$ of $\bm{T}(2)$ on the event $|\bm{S}^{(1)}|_{\bm{\gamma}} + |\bm{S}^{(2)}|_{\bm{\gamma}} = n$ then
\begin{align}
	\lim_{n \to \infty} \min( (|\bm{S}^{(1)}|_{\bm{\gamma}}, |\bm{S}^{(2)}|_{\bm{\gamma}}) \mid |\bm{S}^{(1)}|_{\bm{\gamma}} + |\bm{S}^{(2)}|_{\bm{\gamma}} = n) \convdis |\bm{T}(2)|_{\bm{\gamma}}.
\end{align}
This may easily be verified elementarily or be viewed as a special case for results on general models of random partitions, see~\cite[Thm. 3.4, Prop 2.5]{doi:10.1002/rsa.20771}. Consequently, all but a negligible number of vertices whose extended fringe subtree has a certain shape will lie in a giant component with size (``size'' referring to $|\cdot|_{\bm{\gamma}}$) $m - O_p(1)$. If we let $\bm{S}$ denote the result of  identifying the roots of $\bm{S}^{(1)}$ and $\bm{S}^{(2)}$ and let $w_n$ denote a uniformly selected vertex of the conditioned tree $\bm{S}_n$ with type in $\mathfrak{G}_0$, then it follows by~\eqref{eq:untree} that
\begin{align}
	\mfL( (\bm{S}_n, w_n) \mid \bm{S}_n) \convp \mathfrak{L}(\hat{\bm{T}}(\eta)).
\end{align}
(Recall that above we assigned a clear meaning to all occurrences of $n$ as a subscript of a random tree, making $\bm{S}_n$ a conditioned version of $\bm{S}$ that depends on $\bm{\gamma}$.)
Hence, adding canonical decorations,
\begin{align}
	\label{eq:entree2}
	\mfL( (\bm{S}_n, \bm{\beta}_{\bm{S}_n}, w_n) \mid \bm{S}_n) \convp \mathfrak{L}(\hat{\bm{T}}(\eta),\bm{\beta}_{\hat{\bm{T}}(\eta)}).
\end{align}
Thus quenched convergence of $\mM_n^0$ towards $\hat{\mM}$ may be deduced in exactly the same way using the mapping theorem as for $\mM_n^+$, only instead of using Equation~\eqref{eq:entree} we use Equation~\eqref{eq:entree2}. This verifies Claim c).

\section{Random planar maps with vertex weights}

Let $t>0$ be a constant. We let $\mM_n^t$ denote a random planar map with $n$ edges that assumes a map $M$ (with $n$ edges)  with probability proportional to $t^{\ve(M)}$.
\begin{theorem}
	\label{te:quenchmaptt}
	The random map $\mM_n^t$ admits a distributional limit~$\hat{\mM}^t$ in the local topology. Letting  $c_n$ denote a uniformly selected corner of $\mM_n^t$, it holds that
	\begin{align}
		\label{eq:toshow}
		\mfL( (\mM_n^t, c_n) \mid \mM_n^t) \convp \mfL(\hat{\mM}^t).
	\end{align}
\end{theorem}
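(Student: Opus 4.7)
The plan is to encode $\mM_n^t$ as a conditioned Boltzmann planar map with a suitably chosen weight sequence and then to apply Theorem~\ref{te:regcritquench}. Recall that any finite planar map $M$ satisfies Euler's relation $\ve(M) - \ed(M) + \mathrm{f}(M) = 2$, where $\mathrm{f}(M)$ denotes its number of faces, and $\sum_f \deg(f) = 2\ed(M)$. For a parameter $\rho > 0$, consider the weight sequence $\bm{q}^{t,\rho}$ defined by $q_k^{t,\rho} = \rho^k/t$ for $k \ge 1$. Then
\begin{align*}
	W_{\bm{q}^{t,\rho}}(M) = t^{-\mathrm{f}(M)} \rho^{2\ed(M)},
\end{align*}
so that on the set of maps with exactly $n$ edges this weight is proportional to $t^{\ve(M)}$. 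Consequently, the unmarked $\bm{q}^{t,\rho}$-Boltzmann planar map conditioned on having $n$ edges is distributed as $\mM_n^t$, for every $\rho > 0$.

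The remaining freedom in $\rho$ should be used to make $\bm{q}^{t,\rho}$ regular critical. Since $q_k^{t,\rho}$ is purely geometric in $k$, the bivariate series $f^\bullet$ and $f^\diamond$ from~\eqref{eq:do1}--\eqref{eq:do2} admit explicit rational closed forms, and the admissibility system~\eqref{eq:tc1}--\eqref{eq:tc2} together with the criticality condition~\eqref{eq:crit} reduces to a polynomial problem. Increasing $\rho$ along the curve of admissible pairs pushes the spectral radius of the matrix from Section~\ref{sec:regimes} up until it reaches one; analyticity of $f^\bullet$ and $f^\diamond$ in a neighbourhood of the resulting point ensures regularity. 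I expect this tuning to be the main technical point, and it proceeds in close analogy to the unrestricted case analysed in~\cite{MR3769811,MR2509622}. Denote by $\rho(t)$ the critical value so obtained and set $\bm{q}^t := \bm{q}^{t,\rho(t)}$.

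With $\bm{q}^t$ regular critical, Theorem~\ref{te:regcritquench} applied to the marked $\bm{q}^t$-Boltzmann map $\mM_n$ conditioned on $n$ edges, with half-edge marking (the case $\bm{\gamma} = (1,0,1,1)$), produces a random infinite planar map $\hat{\mM}^t$ such that
\begin{align*}
	\mfL((\mM_n, u_n) \mid \mM_n) \convp \mfL(\hat{\mM}^t),
\end{align*}
where $u_n$ is a uniform half-edge. At each vertex, the cyclic order around it gives a canonical bijection between incident corners and incident half-edges, so a uniform corner and a uniform half-edge of the same map induce identical laws on the space of rooted maps; $u_n$ may therefore be replaced by $c_n$. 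Finally,~\eqref{eq:deroot} yields $d_{\textsc{TV}}(\mM_n, \tilde{\mM}_n) \to 0$, where $\tilde{\mM}_n \eqdist \mM_n^t$, so the quenched limit transfers to $\mM_n^t$ and establishes~\eqref{eq:toshow}. The annealed limit $\mM_n^t \convd \hat{\mM}^t$ then follows by dominated convergence.
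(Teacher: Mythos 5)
Your overall strategy is exactly the paper's: encode $\mM_n^t$ as an edge-conditioned Boltzmann map with a geometric weight sequence, tune the geometric parameter to regular criticality, apply Theorem~\ref{te:regcritquench} with corner (half-edge) marking, and transfer from the vertex-marked map to the unmarked one via~\eqref{eq:deroot}. Building the reciprocal into the weights ($q_k=\rho^k/t$, so weight $t^{-1}$ per face, hence $t^{\ve(M)}$ on the $n$-edge slice by Euler's formula) rather than using $q_k=t\lambda^k$ and substituting $t\mapsto t^{-1}$ at the end, as the paper does, is a cosmetic difference; the corner/half-edge identification and the use of~\eqref{eq:deroot} are also as in the paper.

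The genuine gap is the step you yourself flag as ``the main technical point'': the existence, for \emph{every} $t>0$, of a value $\rho(t)$ making $\bm{q}^{t,\rho}$ regular critical. You assert that increasing $\rho$ along the curve of admissible pairs pushes the spectral radius of the matrix from Section~\ref{sec:regimes} up until it reaches one, but this monotonicity/continuity picture is not proved: you would need to show that the admissible region is nonempty for each $t$, that admissibility is not lost (e.g.\ by the solution of~\eqref{eq:tc1}--\eqref{eq:tc2} disappearing or the series diverging) while the spectral radius is still strictly below one, and that at the boundary point the finiteness of $f^\bullet$ in a neighbourhood (regular criticality) holds. The paper settles this by explicit computation: with geometric weights, $f^\bullet$ and $f^\diamond$ have the closed forms~\eqref{eq:c1}--\eqref{eq:c2}, the system~\eqref{eq:tc1},~\eqref{eq:tc2},~\eqref{eq:crit} reduces to an explicit solution $x(t)>1$ valid for all $t>0$, with $y$ and the geometric parameter recovered from~\eqref{eq:ylfinal}, and regular criticality is then read off from the closed forms. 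Your proposal needs either this computation or a genuine soft argument replacing it; as written, the conclusion that a regular critical $\rho(t)$ exists is only a plausible expectation, so the proof is incomplete at precisely the point that carries the technical weight of the theorem.
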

\begin{proof}
	For any $\lambda>0$ we may consider the weights
	\begin{align}
		q_n = t \lambda^n, \qquad n \ge 1.
	\end{align}
	This way, any map with $n$ edges and $m$ faces receives weight $\lambda^{2n} t^m$. We are going to argue below that for any $t>0$ we may choose $\lambda$ so that $\bm{q}=(q_n)_{n \ge 1}$ is regular critical.  By elementary identities of power series (compare with \cite[Proof of Prop. 6.3]{MR3769811}) the expressions in Equations~\eqref{eq:do1} and \eqref{eq:do2} simplify to
	\begin{align}
		\label{eq:c1}
		f^\bullet(x,y) &= \frac{t(1- Z)}{2 x Z}, \\
		\label{eq:c2}
		f^\diamond(x,y) &= \frac{t \lambda}{(1 - \lambda y)Z},
	\end{align}
	with
	\begin{align}
		\label{eq:origz}
		Z := \sqrt{1-\frac{4\lambda^2 x}{(1 - \lambda y)^2}}.
	\end{align}
	Conditions~\eqref{eq:tc1} and~\eqref{eq:tc2} may be rephrased by
	\begin{align}
		\label{eq:z1}
		Z = \frac{t}{t+2 x-2},
	\end{align}
	and
	\begin{align}
		Z = -\frac{\lambda  t}{y (\lambda  y-1)}.
	\end{align}
	Note that this implies $x>1$. Combining the last two equalities, we obtain
	\begin{align}
		\lambda =\frac{y}{t+2 x+y^2-2}.
	\end{align}
	Plugging this expression into Equations~\eqref{eq:tc1} and \eqref{eq:tc2} and noting that~\eqref{eq:z1} implies $x>1$ yields
	\begin{align}
		\label{eq:ylfinal}
		y=\frac{\sqrt{x-1} \sqrt{t+x-1}}{\sqrt{x}} \quad \text{and} \quad  
		\lambda =\frac{\sqrt{x-1} \sqrt{x} \sqrt{t+x-1}}{2 (t-2) x-t+3 x^2+1}  \quad \text{and} \quad   x>1.
	\end{align}
	Moreover, for any triple $(x,y,\lambda)$ of real numbers satisfying~\eqref{eq:ylfinal}, we may easily verify that  Equations~\eqref{eq:tc1} and \eqref{eq:tc2} hold (and that $y>0$ and $\lambda>0$). Plugging~\eqref{eq:ylfinal} into the criticality condition~~\eqref{eq:crit} yields the complicated expression
	{
		\small
		\begin{align}
			x = &\frac{2}{3}-\frac{t}{3} + \frac{1}{6} \sqrt{6 \sqrt[3]{2} \sqrt[3]{-(t-1)^2 t^2}+4 (t-1) t+4} \\
			&+ \frac{1}{2} \sqrt{-\frac{4 (t+1) (2 t-1) (t-2)}{9 \sqrt{\frac{3 \sqrt[3]{-(t-1)^2 t^2}}{2^{2/3}}+(t-1) t+1}}-\frac{2}{3} \sqrt[3]{2} \sqrt[3]{-(t-1)^2
					t^2}+\frac{8}{9} (t-2)^2+\frac{8 (t-1)}{3}}. \nonumber
		\end{align}
	}
	This solution is strictly bigger than $1$ for any $t>0$ and defining $y$ and $\lambda$ according to~\eqref{eq:ylfinal} we obtain a solution to Equations~\eqref{eq:tc1},~\eqref{eq:tc2}, and~\eqref{eq:crit}. Hence for this choice of $\lambda$ (depending on $t$) the weight sequence $\bm{q}$ is critical. It is clear from the expressions~\eqref{eq:c1},~\eqref{eq:c2},~\eqref{eq:origz} that $\bm{q}$ is even regular critical in this case.

	Let $\mM_n$ denote the corresponding regular critical $\bm{q}$-Boltzmann planar map with $n$ edges. Let $u_n$ denote a uniformly selected corner of $\mM_n$. As $\bm{q}$ is regular critical, it follows by Theorem~\ref{te:regcritquench} that there is an infinite random planar map $\hat{\mM}$ with finite face degrees such that
	\begin{align}
		\mfL( (\mM_n, u_n) \mid \mM_n) \convp \mathfrak{L}(\hat{\mM}).
	\end{align}
	By~\eqref{eq:deroot}, the $\bm{q}$-Boltzmann map $\tilde{\mM}_n$ without a marked vertex consequently satisfies as well
	\begin{align}
		\label{eq:quenchedprel}
		\mfL( (\tilde{\mM}_n, u_n) \mid \tilde{\mM}_n) \convp \mathfrak{L}(\hat{\mM}).
	\end{align}
	
	The random planar map $\tilde{\mM}_n$ assumes any planar map $M$ with $n$ edges, $m$ faces and $k$ vertices with probability proportional to $t^{m}$. That is, 
	\begin{align}
		\Pr{\tilde{\mM}_n = M} = t^{m} c_{n,t}
	\end{align}
	for some constant $c_{n,t}>0$ that only depends on $n$ and $t$. Euler's formula entails that $m = 2 + n - k$. Hence
	\begin{align}
		\Pr{\tilde{\mM}_n = M} = t^{-k} c_{n,t}',
	\end{align}
with $c_{n,t}' = c_{n,t} t^{2+n}$ again only depending on $n$ and $t$. Thus,
\begin{align}
	\tilde{\mM}_n \eqdist \mM_n^{-t}.
\end{align}
Replacing $t$ by $t^{-1}$, Equation~\eqref{eq:toshow} now follows from Equation~\eqref{eq:quenchedprel}.
\end{proof}

\section*{Acknowledgement}

I warmly thank the associate editor and the referee for the thorough reading and helpful comments. In particular, for pointing out a simplification of the proof of Theorem~\ref{te:quenchmaptt}.


\begin{thebibliography}{10}
	
	\bibitem{MR1102319}
	D.~Aldous.
	\newblock Asymptotic fringe distributions for general families of random trees.
	\newblock {\em Ann. Appl. Probab.}, 1(2):228--266, 1991.
	
	\bibitem{MR2013797}
	O.~Angel and O.~Schramm.
	\newblock Uniform infinite planar triangulations.
	\newblock {\em Comm. Math. Phys.}, 241(2-3):191--213, 2003.
	
	\bibitem{MR1871555}
	C.~Banderier, P.~Flajolet, G.~Schaeffer, and M.~Soria.
	\newblock Random maps, coalescing saddles, singularity analysis, and {A}iry
	phenomena.
	\newblock {\em Random Structures Algorithms}, 19(3-4):194--246, 2001.
	\newblock Analysis of algorithms (Krynica Morska, 2000).
	
	\bibitem{MR3183575}
	J.~E. Bj{\"o}rnberg and S.~{\"O}. Stef{\'a}nsson.
	\newblock Recurrence of bipartite planar maps.
	\newblock {\em Electron. J. Probab.}, 19:no. 31, 40, 2014.
	
	\bibitem{MR2097335}
	J.~Bouttier, P.~Di~Francesco, and E.~Guitter.
	\newblock Planar maps as labeled mobiles.
	\newblock {\em Electron. J. Combin.}, 11(1):Research Paper 69, 27, 2004.
	
	\bibitem{zbMATH06549737}
	T.~{Budd}.
	\newblock {The peeling process of infinite Boltzmann planar maps.}
	\newblock {\em {Electron. J. Comb.}}, 23(1):research paper p1.28, 37, 2016.
	
	\bibitem{MR2735332}
	G.~Chapuy, E.~Fusy, O.~Gim\'{e}nez, and M.~Noy.
	\newblock On the diameter of random planar graphs.
	\newblock In {\em 21st {I}nternational {M}eeting on {P}robabilistic,
		{C}ombinatorial, and {A}symptotic {M}ethods in the {A}nalysis of {A}lgorithms
		({A}of{A}'10)}, Discrete Math. Theor. Comput. Sci. Proc., AM, pages 65--78.
	Assoc. Discrete Math. Theor. Comput. Sci., Nancy, 2010.
	
	\bibitem{curienlecture2}
	N.~Curien.
	\newblock Peeling random planar maps.
	\newblock {\em Saint-Flour course}
	\newblock {https://www.dropbox.com/s/bfjbuxiv4ms1gdl/StFlour.pdf}, 2016.

	
	\bibitem{MR3083919}
	N.~Curien, L.~M{\'e}nard, and G.~Miermont.
	\newblock A view from infinity of the uniform infinite planar quadrangulation.
	\newblock {\em ALEA Lat. Am. J. Probab. Math. Stat.}, 10(1):45--88, 2013.
	
	\bibitem{DRMOTA2020108666}
	M.~Drmota and B.~Stufler.
	\newblock Pattern occurrences in random planar maps.
	\newblock {\em Statistics \& Probability Letters}, 158:108666, 2020.
	
	
	\bibitem{MR3068033}
	O.~Gim\'{e}nez, M.~Noy, and J.~Ru\'{e}.
	\newblock Graph classes with given 3-connected components: asymptotic
	enumeration and random graphs.
	\newblock {\em Random Structures Algorithms}, 42(4):438--479, 2013.
	
	\bibitem{MR3342658}
	S.~Janson and S.~{\"O}. Stef{\'a}nsson.
	\newblock Scaling limits of random planar maps with a unique large face.
	\newblock {\em Ann. Probab.}, 43(3):1045--1081, 2015.
	
	\bibitem{zbMATH01713116}
	O.~{Kallenberg}.
	\newblock {Foundations of modern probability. 2nd ed}.
	\newblock {\em New York, NY: Springer}, 2nd ed. edition, 2002.
	
	\bibitem{2005math.....12304K}
	M.~{Krikun}.
	\newblock {Local structure of random quadrangulations}.
	\newblock {\em arXiv math/0512304}, Dec. 2005.
	
	\bibitem{MR3256879}
	L.~M{\'e}nard and P.~Nolin.
	\newblock Percolation on uniform infinite planar maps.
	\newblock {\em Electron. J. Probab.}, 19:no. 79, 27, 2014.
	
	\bibitem{MR2509622}
	G.~Miermont.
	\newblock An invariance principle for random planar maps.
	\newblock In {\em Fourth {C}olloquium on {M}athematics and {C}omputer {S}cience
		{A}lgorithms, {T}rees, {C}ombinatorics and {P}robabilities}, Discrete Math.
	Theor. Comput. Sci. Proc., AG, pages 39--57. Assoc. Discrete Math. Theor.
	Comput. Sci., Nancy, 2006.
	
	\bibitem{MR3769811}
	R.~Stephenson.
	\newblock Local convergence of large critical multi-type {G}alton-{W}atson
	trees and applications to random maps.
	\newblock {\em J. Theoret. Probab.}, 31(1):159--205, 2018.
	
	\bibitem{doi:10.1002/rsa.20771}
	B.~Stufler.
	\newblock Gibbs partitions: The convergent case.
	\newblock {\em Random Structures \& Algorithms}, 53(3):537--558, 2018.
	
	\bibitem{planar}
	B.~Stufler.
	\newblock {Local convergence of random planar graphs}.
	\newblock {\em arXiv:1908.04850}, 2019.
	
	\bibitem{zbMATH07235577}
	B.~{Stufler}.
	\newblock {Limits of random tree-like discrete structures}.
	\newblock {\em {Probab. Surv.}}, 17:318--477, 2020.
	
	\bibitem{multrerooted}
	B.~Stufler.
	\newblock {Rerooting multi-type branching trees: the infinite spine case}.
	\newblock {\em J. Theoret. Probab.}, to appear.
	
\end{thebibliography}

%
%

\end{document}